\newtheorem{definition}{Definition}
\newtheorem{theorem}{Theorem}
\newtheorem{lemma}{Lemma}
\title[Two-dimensional $C^{2,1}$ metric without $C^2$ embedding]{A two-dimensional $C^{2,1}$ metric with no local $C^2$ embedding in $\mathbb{R}^3$, following Pogorelov}
\author{Jonathan Holland}
\newcommand{\osarray}[2]{\overset{\displaystyle{\overset{\displaystyle{#1}}{#2}}}{\tfrac{}{}}}
\newcommand{\dynkinexample}{
$$
\def\objectstyle{\displaystyle}
\xymatrix{
&&&&&\bullet\\
\osarray{a}{\bullet}\ar@{-}[r]&\bullet\ar@{-}[r]&\bullet\ar@{..}[r]&\bullet\ar@{-}[r]&\bullet\ar@{-}[ur]\ar@{-}[dr]&\\
&&&&&\bullet
}
$$
}
\begin{document}
\maketitle 
\nocite{*}

\section{Introduction}
The purpose of this article is to examine a counterexample, due to A. V. Pogorelov \cite{Pogorelov}, of a two-dimensional Riemannian metric of class $C^{2,1}$ that is not locally realizable in $\mathbb{R}^3$ by a $C^2$ isometric embedding.  Pogorelov's argument proceeds as follows:
\begin{itemize}
\item First, construct for each radius $a$, a metric in the disc of radius that is not globally realizable in the disc and that interpolates (in a $C^{2,1}$ fashion) the flat metric at the boundary.
\item Then take a sequence of such metrics in disjoint balls of decreasing radius in $\mathbb{R}^2$ that accumulate at a single point.  The resulting metric will not be $C^2$ embeddable in any neighborhood of the point.  If certain estimates hold, the metric will also be of class $C^{2,1}$ at the point.
\end{itemize}

The metric that Pogorelov uses for the first step is of the kind
\begin{equation}\label{radialmetric}
g_a=d\rho^2 + f_a(\rho)^2d\theta^2
\end{equation}
where
$$f_a(\rho) = \begin{cases}
\rho & \rho\le a/2\\
\rho + a(\rho-a)^3(\rho-a/2)^3& a/2<\rho<a
\end{cases} $$

The coordinates $\rho$ and $\theta$ for metric of the form \eqref{radialmetric} are the geodesic polar coordinates relative to the center of the disc.  Since this metric is rotationally symmetric, it is natural to ask first if there is a rotationally symmetric embedding of the metric into $\mathbb{R}^3$.  It is clear that not every rotationally symmetric Riemannian metric is so embeddable, even locally, as the example of any piece of the hyperbolic plane shows.  To give Pogorelov's result some plausibility, we discuss the general problem of embedding such a metric in a rotationally symmetric fashion.  It turns out that such an embedding does exist for Pogorelov's metric, but that it is only of class $C^{1,1}$.

Next, we shall prove Pogorelov's theorem, correcting some of the details of Pogorelov's paper, and including many others that he does not give.

\section{$C^{1,1}$ isometric embedding of Pogorelov's metric}

Let $D$ be a disc centered at the origin in $\mathbb{R}^2$, and define a metric in $D$ by
$$g = d\rho^2 + f(\rho)^2d\theta^2$$
where $f$ is some given function of class $C^2$.  We ask under what conditions $g$ can be realized as the metric of a surface of revolution in three-dimensional Euclidean space.  A surface of revolution is a surface represented in cylindrical coordinates by $r=g(z)$.  Equivalently, this is the rotation of the curve $r=g(z)$ about the $z$-axis.  The metric has the form
$$h = (1+g'(z)^2)dz^2 + g(z)^2d\theta^2.$$

We will here study only embeddings that preserve the angular parameter $\theta$.  We must therefore have
$$g(z) = f(\rho)$$
and
$$\left(\frac{d\rho}{dz}\right)^2=1+\left(\frac{dg}{dz}\right)^2.$$

Substituting the first equation into the second and applying the chain rule gives
$$\left(\frac{d\rho}{dz}\right)^2=1+\left(\frac{df}{d\rho}\frac{d\rho}{dz}\right)^2$$
so
$$\left(\frac{dz}{d\rho}\right)^2 = 1-\left(\frac{df}{d\rho}\right)^2.$$
So by integration of this equation, we obtain a $C^2$ isometric embedding in the region where $f'(\rho)<1$.

Now in the special case of Pogorelov,
$$f(\rho) = \begin{cases}
\rho & \rho\le a/2\\
\rho + a(\rho-a)^3(\rho-a/2)^3& a/2<\rho<a
\end{cases} $$
where $a$ is a parameter.  Here $f'(\rho)<1$ in the region $(a/2,3a/4)$, and so we obtain a $C^2$ embedding in that annular region.

Let
$$z(\rho) = \begin{cases}
0 & \text{if $\rho\le a/2$}\\
\int_{a/2}^\rho \sqrt{1-f'(t)^2}\,dt &\text{if $a/2<\rho<3a/4$.}
\end{cases}.$$
and
$$r(\rho) = f(\rho).$$
This defines the curve whose surface of revolution is an isometric embedding of the original surface into $\mathbb{R}^3$.  The curve $\rho\mapsto (z(\rho),f(\rho))$ is a regular $C^1$ curve.  However, it is not $C^2$ at $\rho=a/2$, since
$$\lim_{\rho\to a/2^-}z''(\rho) =0$$
but
$$\lim_{\rho\to a/2^+}z''(\rho) =\lim_{\rho\to a/2^+}\frac{-f''(\rho)f'(\rho)}{\sqrt{1-f'(\rho)^2}} = \frac{\sqrt{3}}{2}a^2.$$
Since the second derivative has only a jump discontinuity, the embedding is $C^{1,1}$.

\begin{figure}[htp]
\includegraphics[scale=0.6]{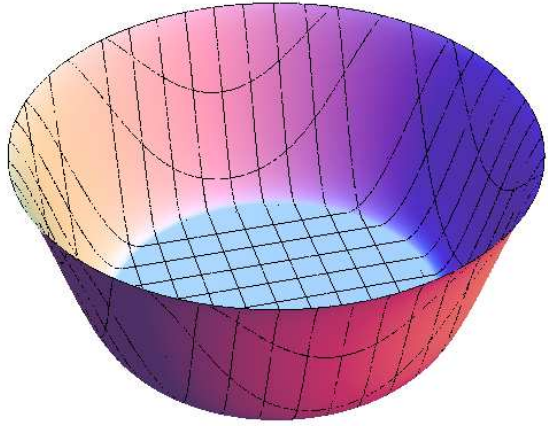}
\caption{$C^{1,1}$ isometric embedding of Pogorelov's metric as a surface of revolution}
\end{figure}

\section{Global nonembeddability}
Let $D_a$ be the disc $\rho<a$.  Equip $D_a$ with the $C^{2,1}$ metric
\begin{equation*}
g_a=d\rho^2 + f_a(\rho)^2d\theta^2
\end{equation*}
where
$$f_a(\rho) = \begin{cases}
\rho & \rho\le a/2\\
\rho + a(\rho-a)^3(\rho-a/2)^3& a/2<\rho<a
\end{cases}.$$
We here prove that there is no $C^2$ embedding of the Riemannian surface $(D_a,g_a)$.

\begin{lemma}
Let $S$ be a $C^2$ developable surface in $\mathbb{R}^3$ with non-zero mean curvature, let $k$ be the non-zero principal curvature at each point, and let $L$ be a generator of the ruling of $S$.  Then along $L$
$$k = \frac{A}{s+B}$$
where $A$ and $B$ are constants and $s$ is the arclength parameter of $L$.
\end{lemma}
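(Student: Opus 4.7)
The plan is to parametrize $S$ near $L$ as a ruled surface and exploit the developability condition to compute the nonzero principal curvature explicitly. I would write $X(s,u) = \alpha(u) + s\,\vec v(u)$, where $\vec v(u)$ is the unit direction of the generator $L_u$ (with $L = L_0$), and $\alpha$ is a curve transverse to the rulings chosen to satisfy $\alpha'(u)\perp \vec v(u)$ — a line of striction. Then $s$ is arclength along each $L_u$, and the first fundamental form is diagonal with $E=1$, $F=0$, and $G = |\alpha'(u) + s\,\vec v'(u)|^2$.

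The crucial step is to translate developability into an algebraic relation between $\alpha'$ and $\vec v'$. Because the tangent plane of $S$ is constant along $L_u$, the unit normal $N(u)$ depends only on $u$. The condition $N(u)\cdot X_u = 0$ for all $s$ then forces both $N\cdot\alpha' = 0$ and $N\cdot\vec v' = 0$; combined with $N\cdot\vec v = 0$ and the striction condition $\alpha'\cdot\vec v = 0$, the vectors $\alpha'$ and $\vec v'$ both lie on the common line $\vec v^\perp\cap N^\perp$, so $\vec v'(u) = \lambda(u)\,\alpha'(u)$ for some scalar function $\lambda$.

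With this relation in hand, the second fundamental form is also diagonal: the coefficient $X_{ss}\cdot N = 0$, $X_{su}\cdot N = \vec v'\cdot N = 0$, and the remaining entry is $(\alpha'' + s\,\vec v'')\cdot N$. Differentiating $\vec v' = \lambda\alpha'$ gives $\vec v'' = \lambda'\alpha' + \lambda\alpha''$, and using $\alpha'\cdot N = 0$ the numerator collapses to $(1+s\lambda)(\alpha''\cdot N)$, while $G = (1+s\lambda)^2|\alpha'|^2$. The nonzero principal curvature is the ratio of these, so one factor $(1+s\lambda)$ cancels and, restricting to $u=0$ along $L$, one reads off
$$k(s) = \frac{\alpha''(0)\cdot N(0)}{(1+s\lambda(0))\,|\alpha'(0)|^2},$$
which is of the required form $A/(s+B)$ with $B = 1/\lambda(0)$ and $A = (\alpha''(0)\cdot N(0))/(\lambda(0)\,|\alpha'(0)|^2)$.

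The main obstacle is the setup — in particular, confirming that a line of striction exists as a $C^1$ curve on the given $C^2$ surface (so the ruled parametrization is well-defined and regular in a neighborhood of $L$) and deriving the developability identity $\vec v' = \lambda\alpha'$ rigorously. Once these are in place, the rational form of $k$ along $L$ is forced by the cancellation above. The degenerate case $\lambda(0) = 0$ corresponds to a cylinder-like piece of $S$ and yields constant $k$, matching the stated form in the limit $B\to\infty$.
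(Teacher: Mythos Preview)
Your computation is correct, but the route differs from the paper's. The paper argues synthetically: the reciprocal $1/k$ is read as the distance, along the surface normal at $x\in L$, from $x$ to the line where the normal plane of $S$ through $L$ meets the normal plane through an infinitesimally near generator $L'$; since the normal direction is parallel along $L$ (developability) and the target is a straight line, this distance is an affine function of the arclength $s$, yielding $1/k=(s+B)/A$ in one stroke. You instead build a ruled parametrization with orthogonal coordinate curves and compute the fundamental forms explicitly, encoding developability as $\vec v'=\lambda\alpha'$ so that one factor of $(1+s\lambda)$ cancels between the second-form entry and $G$. Your approach is more mechanical and easier to verify line by line once the parametrization is set up; the paper's is shorter and more geometric but relies on the classical ``infinitely near generator'' language. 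One small correction: the condition $\alpha'\perp\vec v$ you impose is an orthogonal-directrix condition, not the standard line of striction (which requires $\alpha'\perp\vec v'$); your argument is unaffected, since both $\alpha'$ and $\vec v'$ end up in the one-dimensional space $\vec v^\perp\cap N^\perp$ anyway, but the name should be fixed. The regularity concern you flag --- needing $\alpha''$, $\vec v''$, $\lambda'$ on a merely $C^2$ surface --- is genuine, though the paper's appeal to an infinitesimally near generator carries the same implicit differentiation of the ruling.
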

\begin{proof}
Define the radius of normal curvature to be the reciprocal of the normal curvature.  The radius of normal curvature at $x\in L$ is the distance in the normal direction of $S$ through $x$ to the intersection of the normal plane through $L$ with the normal plane to an infinitely near generator $L'$.  This intersection is a line, so if we move $x$ a distance $s$ along $L$, the radius of normal curvature will change by a linear function, since the normal to a developable surface is parallel along a generator.
\end{proof}

Since the mean curvature cannot change sign on any affine segment of a developable surface:

\begin{lemma}\label{MeanCurvature}
Let $s>t>0$ and $\phi:D_s\to \mathbb{R}^3$ a $C^2$ (topological) embedding. Suppose that $\phi(D_s\setminus\overline{D_t})$ has non-negative mean curvature and $\phi(D_t)$ has zero Gauss curvature.  Then $\phi(D_s)$ has non-negative mean curvature throughout.
\end{lemma}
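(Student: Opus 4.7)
The plan is a proof by contradiction leveraging Lemma 1 and continuity of mean curvature. Suppose for contradiction that $H(\phi(p_0)) < 0$ at some $p_0 \in D_t$. Since $\phi$ is $C^2$, the mean curvature $H$ is continuous on $\phi(D_s)$, so the hypothesis $H \ge 0$ on $\phi(D_s \setminus \overline{D_t})$ extends by continuity to $H \ge 0$ on $\phi(\partial D_t)$. The strategy is to link $\phi(p_0)$ to such a boundary point by a generator of the ruling of $\phi(D_t)$ along which $H$ has constant negative sign, contradicting the boundary bound.

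The relevant generator is constructed as follows. Since $\phi(D_t)$ has vanishing Gauss curvature and $H(\phi(p_0)) \ne 0$, the shape operator at $\phi(p_0)$ has exactly one nonzero principal curvature, and its kernel is a distinguished asymptotic direction. The developable structure integrates this direction to a maximal line segment $L \subset \phi(D_t)$ through $\phi(p_0)$, the generator of the ruling at that point. By Lemma 1, the nonzero principal curvature along $L$ has the form $k(s) = A/(s+B)$, with $A \ne 0$ since $k(\phi(p_0)) = 2 H(\phi(p_0)) < 0$. Consequently $k$, and hence $H = k/2$, can change sign along $L$ only at $s = -B$, where $|k| \to \infty$, which would contradict the $C^2$ regularity of $\phi$. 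Thus $H < 0$ throughout $L$.

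Next, I verify that $L$ has an endpoint on $\partial \phi(D_t)$. The preimage $\phi^{-1}(L)$ is a curve in the bounded disc $D_t$, so its maximal extension must either reach $\partial D_t$ or accumulate at an interior point of $D_t$. An interior accumulation point would require either $k$ to vanish along $L$, impossible since $A \ne 0$, or a breakdown of the developable ruling structure (entering a planar patch or a singular point), again contradicting the explicit form $k(s) = A/(s+B)$ under $C^2$ regularity. Hence $\phi^{-1}(L)$ has an endpoint in $\partial D_t$ whose image $q \in \partial \phi(D_t)$ satisfies both $H(q) \ge 0$ (by continuity from the annular side) and $H(q) < 0$ (by continuity from the interior of $L$), a contradiction.

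The principal subtle point is precisely this maximal-extension argument: one must rule out the generator terminating in the interior of $D_t$ at a planar patch or at a singular point of the ruling before it reaches the boundary. Lemma 1 handles both obstructions at once, since the formula $k(s) = A/(s+B)$ with $A \ne 0$ forbids both the vanishing and the blow-up of $k$ along $L$ inside $\phi(D_t)$, and this in turn pins the generator to a single line of the ruling until it meets $\partial \phi(D_t)$.
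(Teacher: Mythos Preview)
Your argument is correct and is exactly the approach the paper intends: the paper's entire proof is the single clause preceding the lemma (``Since the mean curvature cannot change sign on any affine segment of a developable surface''), i.e., invoke Lemma~1 along a generator through a putative point with $H<0$ and run it out to $\partial D_t$. One small tightening: in your last step, ``$H(q)<0$ by continuity from the interior of $L$'' as stated only yields $H(q)\le 0$; the strict inequality (and hence the contradiction with $H(q)\ge 0$) comes from the formula $k=A/(s+B)$ itself, which either gives a nonzero limit at the endpoint or blows up---you already isolate this mechanism in your final paragraph, so just cite it at the moment of contradiction.
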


\begin{lemma}\label{convexfunction}
Let $z=z(x,y)$ be a $C^2$ convex function on a rectangle $[-c,c]\times [0,b]$.  Suppose that $z_x(x,0)=z_y(x,0)=0$ for all $x\in[-c,c]$ and that $m\le z_{yy}\le M$ throughout the rectangle.  Then $z_{xx}(x,b)\le (M-m)\frac{b^2}{c^2}$ at some point $x\in [-c,c]$.
\end{lemma}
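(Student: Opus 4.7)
The plan is to combine two instances of convexity with integral representations of $z$ and $z_{xx}$ along $y=b$.

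First, I would normalize. Since $z_x(x,0)=0$ on $[-c,c]$, the function $z(x,0)$ is constant in $x$, so I may subtract that constant and assume $z(x,0)=0$. Combined with $z_y(x,0)=0$, Taylor's theorem in the $y$-direction gives
\[
z(x,y)=\int_0^y (y-s)\,z_{yy}(x,s)\,ds,
\]
and the two-sided bound $m\le z_{yy}\le M$ immediately yields the sandwich $\tfrac{m}{2}y^2\le z(x,y)\le \tfrac{M}{2}y^2$ throughout the rectangle. In particular, at $y=b$,
\[
z(c,b)+z(-c,b)-2z(0,b)\le Mb^2-mb^2=(M-m)b^2.
\]

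Next, I would exploit convexity of $z$ in the $x$-variable: for fixed $y=b$, the function $\phi(x):=z(x,b)$ is convex on $[-c,c]$ with $\phi''(x)=z_{xx}(x,b)\ge 0$. Two integrations by parts (splitting the interval at $0$) yield the standard second-difference identity
\[
z(c,b)-2z(0,b)+z(-c,b)=\int_{-c}^{c}(c-|x|)\,z_{xx}(x,b)\,dx.
\]
Since $z_{xx}(\cdot,b)\ge 0$, bounding it below by its minimum over $[-c,c]$ and using $\int_{-c}^{c}(c-|x|)\,dx=c^2$ gives
\[
c^2\min_{x\in[-c,c]}z_{xx}(x,b)\le z(c,b)-2z(0,b)+z(-c,b)\le (M-m)b^2.
\]
Dividing by $c^2$ produces the claimed bound at the minimizing $x$.

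There is no real obstacle here; the only moderately clever step is to introduce the weighted second-difference identity for $\phi$, which converts the existence statement about $z_{xx}$ into a comparison between boundary values of $z$. Everything else is just invoking convexity (PSD Hessian $\Rightarrow$ $z_{xx}\ge 0$ and $\phi$ convex in $x$) and the elementary Taylor bound coming from $m\le z_{yy}\le M$ together with the vanishing of $z,z_x,z_y$ along $y=0$.
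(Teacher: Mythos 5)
Your proof is correct, but it takes a genuinely different route from the paper's. Both arguments start from the same normalization ($z(x,0)=0$) and the same Taylor sandwich $\tfrac{m}{2}b^2\le z(x,b)\le \tfrac{M}{2}b^2$ coming from $m\le z_{yy}\le M$ and the vanishing of $z,z_y$ on $y=0$. From there the paper argues by contradiction: it assumes $f''(x)=z_{xx}(x,b)>(M-m)b^2/c^2$ everywhere and splits into cases according to whether $f(x)=z(x,b)$ has an interior minimum, integrating $f''$ twice over a suitably chosen subinterval to contradict the oscillation bound $\max f-\min f\le\tfrac{M-m}{2}b^2$. You instead prove the statement directly via the weighted second-difference identity
\[
z(c,b)-2z(0,b)+z(-c,b)=\int_{-c}^{c}(c-|x|)\,z_{xx}(x,b)\,dx ,
\]
note that $(c-|x|)\,dx$ has total mass $c^2$, and conclude that the minimum of $z_{xx}(\cdot,b)$ is at most $c^{-2}$ times the left-hand side, which the sandwich bounds by $(M-m)b^2$. (Your one-sided estimate $z(\pm c,b)-2z(0,b)+z(\mp c,b)\le Mb^2-mb^2$ is exactly what is needed.) Your version is cleaner: it avoids the case analysis and the reflection trick entirely, and it does not actually use convexity in $x$ at all --- only continuity of $z_{xx}(\cdot,b)$ on the compact interval so that the minimum is attained. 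Both proofs yield the same constant $(M-m)b^2/c^2$.
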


\begin{proof}
Assume in addition, without loss of generality, that $z(x,0)=0$ for all $x\in[-c,c]$.  Consider the function $f(x) = z(x,b)$.  This is a convex function of one variable in $[-c,c]$.  By integrating $z_{yy}$ twice, we find
$$\frac{m}{2}b^2\le f(x) \le \frac{M}{2}b^2.$$
Assume for a contractiction that $f''(x)>(M-m)b^2/c^2$ for all $x$.  Thus $f(x)$ is a $C^2$ function on $[-c,c]$ satisfying
\begin{enumerate}
\item $f''(x)\ge 0$
\item $\frac{m}{2}b^2\le f(x) \le \frac{M}{2}b^2$
\item $f''(x)>(M-m)b^2/c^2$
\end{enumerate}
from which we hope to deduce a contradiction.

Suppose first that there is a minimum of $f(x)$ in $(-c,c)$, say $x^*$.  If $x^*>0$, then we replace the function $f(x)$ with $f(-x)$ (which still satisfies (1)-(3)) to ensure that $x^*\le 0$.  Now
$$\int_{x^*}^c\int_{x^*}^t f''(x)\,dx\,dt \ge \int_0^c\int_0^t f''(x)\, dx\, dt > \frac{(M-m)b^2}{2}$$
by (3).  But we may carry out the first integral to find
$$\int_{x^*}^c\int_{x^*}^t f''(x)\,dx\,dt =\int_{x^*}^c\left(f'(x) - \underbrace{f'(x^*)}_{=0}\right)\,dx = f(c)-f(x^*) \le \frac{M-m}{2}b^2$$
by (2), a contradiction.

On the other hand, if there is no minimum of $f(x)$ in $(-c,c)$, then by replacing $f(x)$ by $f(-x)$ again if necessary, we can assume that $f'(-c)\ge 0$.  Then
$$\int_{-c}^c\int_{-c}^t f''(x)\,dx\,dt > (M-m)b^2$$
by (3).  But
\begin{align*}
\int_{-c}^c\int_{-c}^t f''(x)\,dx\,dt &= \int_{-c}^c\left(f'(t)-f'(c)\right)\,dt = f(c)-f(-c) - 2cf'(c)\\
&\le f(c)-f(-c)\le \frac{(M-m)b^2}{2}
\end{align*}
by (2), a contradiction.
\end{proof}

\begin{definition}
Let $K\subset\mathbb{R}^2$ be a convex plane domain and $\phi:K\to\mathbb{R}^3$.  A line segment $L$ is called an {\em affine segment} if both endpoints are on the boundary of $K$ and $\phi|_L$ is affine.
\end{definition}

\begin{lemma}
Let $D\subset\mathbb{R}^2$ be a disc and $\phi:D\to\mathbb{R}^3$ be a $C^2$ isometric embedding.  There exist affine segments of arbitrarily small length.  Moreover, if an affine segment cuts the disc into two parts, then the smaller part contains affine segments of arbitrarily small length.
\end{lemma}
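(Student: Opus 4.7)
Since $D$ is flat, Gauss' \emph{Theorema Egregium} implies that $\phi(D)$ is a $C^2$ surface of zero Gauss curvature in $\mathbb{R}^3$. By the classical structure theorem for $C^2$ surfaces with $K\equiv 0$, at every point $p\in D$ where $\phi$ has nonzero mean curvature the image $\phi(D)$ carries through $\phi(p)$ a unique straight generator (ruling), and under the isometry its $\phi$-preimage is a Euclidean line segment in $D$ along which $\phi$ is affine. I would begin by arguing that every such preimage extends to a chord of $D$ with both endpoints on $\partial D$---that is, to an affine segment in the sense of the preceding definition---by continuing it across any planar regions of $\phi(D)$ that it happens to traverse; on such regions $\phi$ is locally affine, so the prolongation remains an affine map, and continuity of the tangent plane across the boundary of a planar region forces consistency of the generator direction.

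If $\phi(D)$ is itself a plane, every chord of $D$ is an affine segment and arbitrarily short ones abound. Otherwise, through any point of nonzero mean curvature the ruling construction produces at least one affine segment $L_0$, and the problem of finding arbitrarily short affine segments reduces, via $L_0$, to the second assertion of the lemma.

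For that second assertion, let $L$ be an affine segment cutting $D$ into two pieces and let $D'$ be the smaller one, bounded by $L$ and an arc $\gamma$ of $\partial D$. The ruling direction on the non-planar part of $D'$ is a continuous line field having $L$ itself as one of its integral lines; consequently rulings whose defining point lies in $D'$ close to $L$ are chords nearly parallel to $L$, and being integral curves of a line field they cannot cross $L$, so they lie in $D'$ with both endpoints on $\gamma$. As the defining point is moved toward one of the endpoints of $L$ on $\partial D$, the corresponding ruling chord degenerates to a single boundary point, so its length tends to zero, furnishing affine segments in $D'$ of arbitrarily small length.

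I expect the main obstacle to be the handling of planar regions of $\phi(D)$ lying inside $D'$, where the ruling line field is undefined and the continuous-line-field argument breaks down. The remedy is the observation that on a planar region $\phi$ is itself affine, so any Euclidean chord of $D$ passing through such a region is an affine segment across it; planar regions therefore only contribute additional affine segments and cannot prevent the infimum of affine-segment lengths in $D'$ from being zero.
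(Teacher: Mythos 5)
Your proposal attempts a constructive argument via the structure theory of developable surfaces, and its central step fails. The claim that carries all the weight is: ``as the defining point is moved toward one of the endpoints of $L$ on $\partial D$, the corresponding ruling chord degenerates to a single boundary point, so its length tends to zero.'' This does not follow. A chord of $D$ through a point $p$ close to a boundary point $\xi$ need only have \emph{one} endpoint close to $\xi$; the chord itself can be long. Worse, by your own continuity argument the ruling directions at points of $D'$ near $L$ are nearly parallel to $L$, so the ruling through a point of $D'$ just inside the endpoint $\xi$ of $L$ runs nearly parallel to $L$ across the whole of $D'$ and has length close to $|L|$, not close to $0$. Concretely, in the unit disc with $L$ the chord $\{y=h\}$ of length $2s$ and $D'$ the smaller piece above it, the point $(s-\delta,\,h+\delta')$ lies on the chord $\{y=h+\delta'\}$, which is contained in $D'$, nearly parallel to $L$, and of length close to $2s$. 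So the mechanism you describe simply does not manufacture short affine segments. There are secondary problems as well: you rule out crossings of $L$ on the grounds that rulings are ``integral curves of a line field,'' but a continuous (non-Lipschitz) line field does not have unique integral curves, so this is not an argument; the correct reason a crossing is harmless is that two affine segments meeting at an interior point span a triangle on which $\phi$ is affine, which produces shorter affine segments rather than forbidding the crossing. Finally, the behavior of the ruling field at the boundary of the planar set is exactly where the structure theory is delicate, and ``planar regions only contribute additional affine segments'' does not repair a quantitative degeneration argument that was never valid on the non-planar part.

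For contrast, the paper sidesteps all of this with a compactness argument. Suppose the lengths of affine segments had a positive infimum $\epsilon$. The chords of $D$ of length at least $\epsilon$ form a compact family, and a limit of affine maps is affine, so a minimizing sequence yields an affine segment $L$ of minimal length $\epsilon$. The smaller piece $D'$ cut off by $L$ must contain some segment on which $\phi$ is affine (a ruling through a non-planar point, or any short chord if $D'$ is planar); such a segment either has both endpoints on $\partial D$, contradicting minimality, or crosses $L$, in which case the affine triangle it spans with $L$ inside $D'$ has a leg that is a shorter affine segment, again a contradiction. The same scheme, run inside $D'$, gives the second assertion. If you want to keep a constructive flavor, you would need to prove a genuine degeneration statement (for instance, that rulings confined to $D'$ with both endpoints on the arc $\gamma$ exist arbitrarily close to $\gamma$'s far side), which is essentially as much work as the compactness proof and still requires the affine-triangle lemma to control crossings.
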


\begin{proof}
Assume for contradiction that there is a greatest lower bound $\epsilon>0$ to the length of affine segments.  The space of oriented line segments of length $\ge \epsilon$ with endpoints on $\partial D$ is a closed subset of $\partial D\times\partial D$, and is therefore compact.  Thus there is a minimizing sequence of affine segments $L_n\to L$ such that the length of $L_n$ tends to $\epsilon$.  Since the limit of affine maps is affine, $\phi|_L$ is also affine, and so $L$ is an affine segment.  Now $L$ cuts $D$ into two parts.  Let $D'$ be the smaller of the two.  Then $\phi|_{D'} : D'\to\mathbb{R}^3$ is an isometric embedding.  Any line segment $L'$ in $D'$ on which $\phi$ is affine must either intersect $L$ or have both points on the boundary of $D$.  The second case is impossible, in view of the minimality of $L$, and so that leaves only the first case to consider.  In that case the convex hull of $L$ and $L'$ inside $D'$ is a triangle on which $\phi$ is affine.  Some leg of this triangle is an affine segment shorter than $L$, which again contradicts its minimality.

For the second assertion, let $E$ be the smaller domain cut from $D$ by $L$.  First, note that there is at least one affine segment (with endpoints in $\partial D\cap \partial E$) in $E$ that is shorter than $L$, by the preceding argument.  The set of all such affine segments is nonempty.  Assume for contradiction that there is a greatest lower bound $\epsilon>0$ to the lengths of such segments.  Using the same argument as before, we get an affine segment of length $\epsilon$ whose endpoints are in $\partial D\cap \partial E$.  But then we can derive a contradiction in the same way.
\end{proof}

\subsection{Gauss curvature}
\begin{lemma}\label{gausscurvature}
For a surface metric in geodesic normal polar coordinates \eqref{radialmetric}, the Gauss curvature is $K=-f_a''(\rho)/f_a(\rho)$.
\end{lemma}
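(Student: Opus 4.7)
The plan is to compute the Gauss curvature by a direct local calculation, using either Brioschi's formula for an orthogonal metric or the method of moving frames; I would prefer the latter because it is cleaner and the signs are easier to track. The metric $g = d\rho^2 + f(\rho)^2 d\theta^2$ is already in orthogonal form, so I read off the orthonormal coframe
$$\omega^1 = d\rho, \qquad \omega^2 = f(\rho)\, d\theta.$$

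Next I would compute the exterior derivatives: $d\omega^1 = 0$ and $d\omega^2 = f'(\rho)\, d\rho \wedge d\theta$. Cartan's first structure equations $d\omega^i = -\omega^i{}_j \wedge \omega^j$ (for a torsion-free connection) together with the antisymmetry $\omega^1{}_2 = -\omega^2{}_1$ of the Levi-Civita connection uniquely determine the single connection one-form $\omega^1{}_2$. The first equation gives $\omega^1{}_2 \wedge \omega^2 = 0$, forcing $\omega^1{}_2$ to be a multiple of $d\theta$, and then the second equation pins down
$$\omega^1{}_2 = -f'(\rho)\, d\theta.$$

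Finally I would apply Cartan's second structure equation. In two dimensions the only curvature component is the Gauss curvature, and it is extracted from $d\omega^1{}_2 = -K\, \omega^1 \wedge \omega^2$. Computing,
$$d\omega^1{}_2 = -f''(\rho)\, d\rho \wedge d\theta = -\frac{f''(\rho)}{f(\rho)}\, \omega^1 \wedge \omega^2,$$
so $K = f''(\rho)/f(\rho)$ with one sign convention and $K = -f''(\rho)/f(\rho)$ with the one used in the paper. There is no real obstacle here; the only delicate point is fixing a sign convention (for the structure equation, and for the definition of $K$ in terms of the curvature two-form) that matches the statement of the lemma, and then verifying the result against the well-known case $f(\rho) = \sin(\rho/R)\cdot R$ on the sphere of radius $R$, where the formula should return $K = 1/R^2$.
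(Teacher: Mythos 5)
Your calculation is correct, and in fact the paper offers no proof of this lemma at all --- it is stated as a standard fact (it is the radial Jacobi equation in geodesic polar coordinates), so your moving-frames derivation supplies an argument the paper omits. The coframe, the connection form $\omega^1{}_2=-f'(\rho)\,d\theta$, and the computation $d\omega^1{}_2=-(f''/f)\,\omega^1\wedge\omega^2$ are all right. The one place you should be firmer is the sign: the Gauss curvature is an invariant of the metric, so it cannot genuinely be $+f''/f$ under one convention and $-f''/f$ under another; the convention-dependence lives only in the intermediate identification of $K$ with the coefficient of the curvature two-form, and your proposed sphere check settles it --- with $f(\rho)=R\sin(\rho/R)$ one has $f''=-R^{-1}\sin(\rho/R)$, hence $-f''/f=1/R^2$ as required, so the lemma's sign is the correct one and you should simply assert $K=-f''/f$. (An equivalent, slightly more elementary route is the standard formula $K=-\frac{1}{\sqrt{G}}\,\partial_\rho^2\sqrt{G}$ for a metric $d\rho^2+G\,d\theta^2$ with $G=f^2$, which gives the result in one line; either approach is fine.)
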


With the particular function $f_a(\rho)$ in our case, we find
$$K=-\frac{6a(a-2r)(a-r)(11a^2-30ar+20r^2)}{a(a-2r)^3(a-r)^3+8r}.$$
Expanding to second order in $r-a/2$, we have
$$K=\frac{3}{2}a^3(\rho-a/2) - 21a^2(\rho-a/2)^2 + O(\rho-a/2)^3.$$
For $\epsilon>0$ sufficiently small,
\begin{equation}\label{GaussCurvature}
K>\frac{3}{4}a^3(\rho-a/2)
\end{equation}
in the annular region $a/2<\rho<a/2+\epsilon$.

\begin{theorem}
For $a$ sufficiently small, there is no $C^2$ isometric embedding of the metric $g$ into $\mathbb{R}^3$.
\end{theorem}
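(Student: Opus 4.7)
The plan is to assume, for contradiction, that a $C^2$ isometric embedding $\phi : D_a \to \mathbb{R}^3$ exists. By Lemma~\ref{gausscurvature} and \eqref{GaussCurvature}, the Gauss curvature $K$ vanishes on $D_{a/2}$ and is strictly positive on the annular band $a/2 < \rho < a/2 + \epsilon$. Positive $K$ forces the two principal curvatures of $\phi$ to share a sign there, so I choose the orientation of the image so that the mean curvature is nonnegative on this band; Lemma~\ref{MeanCurvature} then propagates nonnegativity of the mean curvature to the whole of $\phi(D_a)$. Consequently, $\phi(D_{a/2})$ is a $C^2$ developable surface whose nonzero principal curvature is nonnegative and, along each generator, takes the form $A/(s+B)$ by the first lemma.

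Next, I apply the affine-segment lemma to $\phi|_{D_{a/2}}$ to obtain a generator $L\subset D_{a/2}$ of length $2c$, with $c$ to be chosen as a positive fraction of $a$. After a rigid motion of $\mathbb{R}^3$, place $L$ along the $x$-axis, take the (constant) tangent plane of $\phi$ along $L$ to be the $xy$-plane, and orient the positive $y$-axis to point from $L$ toward the nearer arc of $\partial D_{a/2}$, hence into the annulus. Near $L$ the surface is a graph $z = z(x,y)$ with $z=z_x=z_y=0$ on $\{y=0\}$; moreover $z_{xx}(x,0)=0$ since the generator direction is a principal direction of vanishing curvature. On a rectangle $R = [-c,c] \times [0,b]$ with $b$ small, nonnegative mean and Gauss curvatures make $z$ convex, and one has $0 \le z_{yy} \le M$ on $R$ with $M$ controlled by the $C^2$ norm of $\phi$.

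Lemma~\ref{convexfunction}, applied with $m=0$, yields some $x^* \in [-c,c]$ with
\[
z_{xx}(x^*, b) \;\le\; \frac{M b^2}{c^2}.
\]
On the other hand, the Monge--Amp\`ere identity $K\,(1+|\nabla z|^2)^2 = z_{xx}z_{yy} - z_{xy}^2 \le z_{xx} z_{yy}$ together with $z_{yy}\le M$ and the lower bound \eqref{GaussCurvature}, which gives $K(x^*, b) \ge \tfrac{3}{4}a^3 b$ to leading order for $(x^*, b)$ in the annulus, yields $z_{xx}(x^*, b) \ge 3a^3 b/(4M)$. Comparing,
\[
c^2 \;\le\; \frac{4 M^2 b}{3 a^3},
\]
which fails whenever $c$ is a fixed fraction of $a$ and $b$ is taken sufficiently small, yielding the desired contradiction.

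The hard part will be the coordinated selection of $L$, $c$, and $b$, together with uniform control of $M$ as $a \to 0$. In particular, one must ensure that the developable image $\phi(D_{a/2})$ admits a generator of length comparable to $a$ rather than uncontrollably short; the affine-segment lemma directly produces short generators, but its ``moreover'' clause allows iterative refinement within a subdomain in which a suitable generator must exist. The smallness of $a$ then enters to guarantee that the annulus in which \eqref{GaussCurvature} holds is wide enough to contain $R$ and that the constants ultimately reconcile.
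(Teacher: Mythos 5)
Your setup (propagating nonnegative mean curvature via Lemma~\ref{MeanCurvature}, picking an affine segment, writing the surface as a convex graph over a rectangle, and playing Lemma~\ref{convexfunction} against the Gauss curvature lower bound) matches the paper, but the final limiting argument has two genuine gaps that together break the proof. First, you need $c$ to be a fixed fraction of $a$, yet the affine-segment lemma only supplies segments of \emph{arbitrarily small} length; nothing guarantees a generator of length comparable to $a$, and you flag this but do not resolve it. Second, and more fatally, your plan to ``take $b$ sufficiently small'' with $c$ fixed is incompatible with the geometry: the segment $L$ is a chord of $D_{a/2}$, so the far edge $y=b$ of the rectangle only leaves the flat disc once $b$ exceeds the sagitta, which is of order $c^2/a$. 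For $b$ below that threshold the point $(x^*,b)$ still lies where $K=0$ and the lower bound $K\ge \tfrac34 a^3 b$ simply does not apply. If you instead take $b\sim c^2/a$ as required, your final inequality $c^2\le 4M^2b/(3a^3)$ reduces to $M\gtrsim a^2$, which is no contradiction since $M$ is only bounded above by the $C^2$ norm of the embedding and need not be small.

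The paper resolves this by running the limit in the opposite direction: take $c\to 0$ (which the affine-segment lemma does deliver, via a nested family of segments), set $b=3c^2/a$ so the far edge clears the sagitta by at least $c^2/a$, and—crucially—keep the quantity $M-m$, the oscillation of $z_{yy}$ over the rectangle, rather than replacing $m$ by $0$. The bound $z_{xx}(x,b)\le (M-m)\,9c^2/a^2$ combined with $K\ge \tfrac34 a^2c^2$ makes the factors of $c^2$ cancel, giving a principal curvature at least of order $a^4/(M-m)$ \emph{independently of} $c$; since $z$ is $C^2$, the oscillation $M-m\to 0$ as the rectangles shrink, so the principal curvature blows up, contradicting the $C^2$ hypothesis. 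Your substitution $m=0$ discards exactly the quantity that tends to zero, which is why your version cannot close.
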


\begin{proof}
Suppose for contradiction that there exists such an embedding $\phi:(D_a,g_a)\to \mathbb{R}^3$.  Let $D\subset D_a$ be the disc $0\le\rho<a/2+\epsilon$ where $\epsilon$ is small, and $D_0$ be the smaller disc $0\le\rho<a/2$.  Then $\phi(D)$ has non-negative Gauss curvature and strictly positive Gauss curvature in a neighborhood of the boundary.  By Lemma \ref{MeanCurvature}, $\phi(D)$ has non-negative mean curvature throughout.

Let $c>0$ be small relative to $a$ and $\epsilon$.  By choosing $c$ even smaller if necessary, we can assume that there is an affine segment $L$ of $D_0$ of length $2c$.  By applying a Euclidean transformation, we can assume that $L$ is the interval $[-c,c]$ of the $x$-axis and that the surface is tangent to the $xy$-plane along $L$.  A neighborhood of $L$ is realizable as a graph over the $xy$ plane.  By choosing $c$ smaller still, we can assume that a portion of the surface is realizable as a graph $z=z(x,y)$ over a box $[-c,c]\times [0,b]$ where $b=3c^2/a$.

Since both principle curvatures are non-negative, $z$ is convex (or concave; in that event, we perform the reflection $z\to -z$).  Now, along $[-c,c]$, $z_x=z_y=0$.  Let $M=\max_{[-c,c]\times [0,b]} z_{yy}$, $m=\min_{[-c,c]\times [0,b]} z_{yy}$.  Then by Lemma \ref{convexfunction}, there exists an $x\in [-c,c]$ such that $z_{xx}(x,b)\le (M-m)\frac{b^2}{c^2} =  (M-m)\frac{9c^2}{a^2}$.  The point $(x,b)$ will lie a geodesic distance $\ge c^2/a$ outside the circle $\rho=a/2$.  Indeed, the altitude of the triangle formed by the center of the circle $\rho=a/2$ and the points $(\pm c,0)$ in $D_a$ has length $\sqrt{a^2/4 - c^2}$, so the sagitta to the segment $[-c,c]\times\{b\}$ is
\begin{align*}
\frac{a}{2} -  \sqrt{\frac{a^2}{4} - c^2} &= \frac{a}{2}\left(1-\sqrt{1-\frac{4c^2}{a^2}}\right)\\
&= \frac{a}{2}\left(1-\left(1-\frac{2c^2}{a^2}\right)\right) + O(c^4) = \frac{c^2}{a} + O(c^4) \le \frac{2c^2}{a}
\end{align*}
since $c$ is small.  

But by the discussion following Lemma \eqref{gausscurvature}, the Gauss curvature at $(x,b)$ is greater than $\frac{3}{4}a^3\left(\frac{c^2}{a}\right) = 3a^2c^2/4$.  Indeed, we have shown that $K>\frac{3}{4}a^2(\rho-a/2)$.  On the segment ${b}\times [-c,c]$ of the $xy$ plane, the midpoint is the closest to the curve $\rho=a/2$, with distance $\ge c^2/a$ (by the preceding calculation).

Since $|z_{xx}|\le (M-m)9c^2/a^2$ at $P$, the maximum principal curvature at $P$ is greater than $\frac{a^4}{9(M-m)}$.  Now, $M-m$ depends on the $c$ and the original affine segment selected.  We choose a family of affine segments, one subtending the other, such that $c\to 0$.  Doing this, since $z$ is $C^2$, we have $M-m\to 0$ as well.  But this implies that the principal curvature blows up.
\end{proof}

\section{Local nonembeddability}
Let $D_n$ be the closed disc of radius $1/2(n+1)^2$ centered at $(1/n,0)$ for $n=1,2,\dots$.  These are disjoint discs whose radii tend to zero and whose centers tend to the point $(0,0)$.  In each disc, define a metric $h_n$ to be the metric $g_{1/2(n+1)^2}$ constructed in the preceding section, where $\rho$ is the distance to, and $\theta$ is the polar angle about, the point $(1/n,0)$ .  Then, let $h$ be the flat metric on the complement of $\bigcup_n D_n$, and the metric $h_n$ on $D_n$.

\begin{lemma}
$h\in C^{2,1}$
\end{lemma}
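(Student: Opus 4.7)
The plan is to work in global Cartesian coordinates $(x,y)$ on a neighborhood of the origin and decompose
$$h_{ij}(x,y) = \delta_{ij} + \sum_{n\ge 1} (\eta_n)_{ij}(x,y),$$
where $\eta_n$ is the tensor $h_n - h_{\text{flat}}$ extended by $0$ outside $D_n$.  Writing the flat metric in polar coordinates $(\rho,\theta)$ centered at $(1/n,0)$ as $d\rho^2 + \rho^2\,d\theta^2$, we have $\eta_n = F_n(\rho)\,d\theta^2$ with $F_n(\rho) = 2\rho\psi_n(\rho) + \psi_n(\rho)^2$ and $\psi_n(\rho) = a_n(\rho-a_n)^3(\rho-a_n/2)^3$ on $a_n/2\le\rho\le a_n$, $\psi_n\equiv 0$ otherwise, where $a_n = 1/2(n+1)^2$.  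Since $\psi_n$ vanishes to order $3$ at both $\rho = a_n$ and $\rho = a_n/2$, $\eta_n$ and its first two derivatives vanish on $\partial D_n$ and on the circle $\rho = a_n/2$, so the sum is $C^2$ across all interior boundaries and at most one term is nonzero at any given point.

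The main computation is a scaling identity.  With $(\tilde x,\tilde y) = (x - 1/n,\,y)$ and $(X,Y) = (\tilde x/a_n,\,\tilde y/a_n)$, a direct calculation gives
$$(\eta_n)_{ij}(x,y) = a_n^6\,\Phi^{(1)}_{ij}(X,Y) + a_n^{12}\,\Phi^{(2)}_{ij}(X,Y),$$
where $\Phi^{(1)}, \Phi^{(2)}$ are fixed $C^{2,1}$ tensor fields on the annulus $1/2\le\sqrt{X^2+Y^2}\le 1$, independent of $n$, vanishing to order $3$ on each boundary circle.  Since $\partial_x = a_n^{-1}\partial_X$, this yields bounds of the form $|\partial^k\eta_n|\le C\,a_n^{6-k}$ on $D_n$ for $k = 0,1,2$, a uniform Lipschitz bound $[\partial^2\eta_n]_{\mathrm{Lip}} \le C\,a_n^3$ on $D_n$, and the sharper local estimate $|\partial^2\eta_n(p)|\le C\,a_n^3\,\operatorname{dist}(p,\partial D_n)$ near $\partial D_n$, reflecting the linear vanishing of $\partial^2\eta_n$ on $\partial D_n$.

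With these estimates in hand, $C^2$ regularity at the origin is immediate: $|\partial^2\eta_n|\le Ca_n^4\to 0$, so $\partial^2 h$ extends continuously by $0$ at $(0,0)$.  For the Lipschitz property of $\partial^2 h$, I would proceed by case analysis.  If $p,q\in D_n$, the internal Lipschitz constant $Ca_n^3\le Ca_1^3$ suffices.  If $p\in D_n$ and $q$ lies in the flat region, the segment from $p$ to $q$ crosses $\partial D_n$, whence $|\partial^2 h(p)-\partial^2 h(q)| = |\partial^2\eta_n(p)|\le C\,a_n^3\operatorname{dist}(p,\partial D_n)\le Ca_n^3|p-q|$.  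If $p\in D_n$ and $q\in D_m$ with $n\ne m$, the same boundary-decay estimate applied on each side gives $|\partial^2 h(p)-\partial^2 h(q)|\le C(a_n^3+a_m^3)|p-q|$.  Away from the origin only finitely many discs meet any compact set, so there is nothing further to verify.

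The hard part will be establishing the scaling identity itself -- specifically, checking that after dividing by the appropriate power of $a_n$ one obtains a fixed $C^{2,1}$ tensor on a fixed annulus, and that the order-$3$ vanishing of $\psi_n$ at $\rho = a_n$ and $\rho = a_n/2$ really does yield $\partial^2\eta_n = 0$ on the corresponding circles, so that the extension of $\eta_n$ by $0$ is globally $C^{2,1}$.  The disjointness of the discs $D_n$, which underlies the definition of $h$ and the case analysis above, requires the elementary inequality $1/(n(n+1)) > 1/2(n+1)^2 + 1/2(n+2)^2$ for every $n\ge 1$; this is a routine but necessary preliminary.
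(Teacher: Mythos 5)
Your proposal is correct and follows the same strategy as the paper's proof: write $h-\delta$ as the sum of the compactly supported perturbations $h_n-\delta$, observe that each extends $C^{2,1}$-smoothly by zero because the cubic factors vanish to third order at $\rho=a_n/2$ and $\rho=a_n$, and conclude from termwise $C^{2,1}$ estimates that decay in $n$. Your scaling argument yields $|\partial^k\eta_n|=O(a_n^{6-k})$ and $[\partial^2\eta_n]_{\mathrm{Lip}}=O(a_n^3)$, which are sharper than (and not literally consistent with) the exponents displayed in the paper, but both sets of bounds are summable so the conclusion is unaffected; your explicit boundary-decay case analysis for the cross-disc Lipschitz estimate simply makes concrete what the paper's appeal to a Cauchy sequence in the Banach space $C^{2,1}$ leaves implicit.
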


\begin{proof}
We estimate the norms of the coefficients on the derivatives of the metrics $h_n$ in the $xy$-coordinates.  Let $\delta$ be the Euclidean metric.  Then as $n\to\infty$, we estimate by computing $(f_a(\rho)^2/\rho^4)'$ and $(f_a(\rho)^2/\rho^4)''$ that
\begin{align*}
\|h_n-\delta\|_{\infty} &= O\left(\frac{1}{(n+1)^{20}}\right)\\
\|Dh_n\|_{\infty} &= O\left(\frac{1}{(n+1)^6}\right)\\
\|D^2h_n\|_{\infty} &= O\left(\frac{1}{(n+1)^4}\right)\\
[D^2h_n]_{Lip} &= O\left(\frac{1}{(n+1)^2}\right)
\end{align*}
where the Lipschitz constant is obtained from $(f_a(\rho)^2/\rho^2)'''$ for $\rho>a/2$.  The partial sums of $h_n-\delta$ form a Cauchy sequence in $C^{2,1}$, and therefore $h$ is $C^{2,1}$.
\end{proof}

\begin{theorem}
There is no $C^2$ isometric embedding the metric $h$ in any neighborhood of the origin.
\end{theorem}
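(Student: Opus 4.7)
The plan is to reduce the local nonembeddability at the origin to the global nonembeddability theorem of the previous section by a simple restriction argument. The key point is that the discs $D_n$ accumulate at the origin and their radii $1/2(n+1)^2$ tend to zero, while each $(D_n, h_n)$ is (after translation) isometric to $(D_{a_n}, g_{a_n})$ with $a_n = 1/2(n+1)^2$.

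Concretely, I would argue as follows. Suppose for contradiction that there is a $C^2$ isometric embedding $\phi : U \to \mathbb{R}^3$ of $(U, h)$, where $U$ is some open neighborhood of the origin. Since the centers $(1/n, 0)$ of $D_n$ converge to the origin and the radii $1/2(n+1)^2$ tend to zero, for all sufficiently large $n$ we have $D_n \subset U$. The restriction $\phi|_{D_n} : D_n \to \mathbb{R}^3$ is then a $C^2$ map, and it is an isometric embedding of $(D_n, h|_{D_n}) = (D_n, h_n)$. By the translational symmetry of Euclidean space, $(D_n, h_n)$ is isometric via the translation $x \mapsto x - (1/n, 0)$ to $(D_{a_n}, g_{a_n})$.

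Now invoke the theorem of the previous section: for $a$ sufficiently small, there is no $C^2$ isometric embedding of $(D_a, g_a)$ into $\mathbb{R}^3$. Choose $N$ so large that $a_n = 1/2(n+1)^2$ falls below this threshold for every $n \ge N$. Then $\phi|_{D_N}$, precomposed with the translation, would provide exactly such a forbidden embedding of $(D_{a_N}, g_{a_N})$, a contradiction.

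The construction is essentially immediate once the previous theorem is in hand, so there is no serious obstacle; the only point requiring a moment's care is that the restriction of a $C^2$ isometric embedding to a closed subdisc on which the ambient metric agrees with $h_n$ really is a $C^2$ isometric embedding of $(D_n, h_n)$, which is automatic from the definition of $h$. Nothing about the $C^{2,1}$ regularity of $h$ at the origin is needed here — that regularity is only relevant to make the counterexample a counterexample to embeddability of $C^{2,1}$ metrics, and it was established in the preceding lemma.
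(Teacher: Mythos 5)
Your proposal is correct and follows essentially the same route as the paper: restrict the hypothesized embedding to a disc $D_n$ contained in the neighborhood, identify $(D_n,h_n)$ with $(D_{a_n},g_{a_n})$ by translation, and choose $n$ large enough that the global nonembeddability theorem applies. You simply spell out the details (translation, choice of $N$) that the paper leaves implicit.
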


\begin{proof}
Any such neighborhood must contain infinintely many discs $D_{1/2(n+1)^2}$.  Restricting the isometric embedding to any such disc gives a $C^2$ isometric embedding of the metric $g_{1/2(n+1)^2}$.  Choosing $n$ to be sufficiently large that the results of the preceding section hold gives a contradiction.
\end{proof}

\bibliography{pogorelov}{}
\bibliographystyle{plain}

\end{document}